\numberwithin{equation}{section}
\newtheorem{theorem}{Theorem}[section]
\newtheorem{proposition}[theorem]{Proposition}
\newtheorem{corollary}[theorem]{Corollary}
\newcommand{\RR}{{\mathbb{R}}}
\newcommand{\SL}{\mathrm{SL}}
\newcommand{\SO}{{\mathrm{SO}}}
\newcommand{\Mat}{\mathrm{Mat}}
\newcommand{\ad}{{\mathrm{ad}}}
\newcommand{\Lie}[1]{\mathfrak{\lowercase{#1}}}
\newcommand{\tr}{\mathrm{tr}}
\providecommand{\RR}{\mathbb{R}}
\definecolor{han}{rgb}{1.0, 0, 0}
\newcommand{\rddots}{\reflectbox{$\ddots$}}
\newcommand{\rem}[1]{ }
\newcommand\xleftrightarrow[2][]{%
  \ext@arrow 9999{\longleftrightarrowfill@}{#1}{#2}}
\newcommand\longleftrightarrowfill@{%
  \arrowfill@\leftarrow\relbar\rightarrow}
\title{Weight decomposition of $\Lie{sl}_d(\RR)$ with respect to the adjoint representation of $\Lie{so}(p,q)$}
\author{Jiyoung Han}
\address{School of Mathematics, Korea Institute for Advanced Study}
\email{jiyounghan@kias.re.kr, hanjiwind@gmail.com}
\begin{document}

\begin{abstract}
In this concise article, we compute the weight decomposition of $\Lie{SL}_d(\RR)$ with respect to the adjoint representation of $\Lie{so}(p,q)$, where $d=p+q$ and demonstrate in detail that $\Lie{SL}_d(\RR)$ comprises two irreducible $\Lie{so}(p,q)$-invariant subspaces. This can be employed to establish the well-known fact that the identity component of $\SO(p,q)$ is a maximal connected subgroup of $\SL_d(\RR)$.
\end{abstract}

\clearpage\maketitle
\thispagestyle{empty}


\medskip


\setcounter{tocdepth}{1}

\tableofcontents

\section{Introduction}\label{Sec: Introduction}

Let $\Lie{g}$ be a Lie algebra and consider a Lie algebra representation $(\Phi, V)$ of $\Lie{g}$. A subspace $W\subseteq V$ is called \emph{$\Lie{g}$-invariant} if $\Phi(X)W\subseteq W$ for all $X\in \Lie{g}$, and we say that a $\Lie{g}$-invariant subspace $W$ is \emph{$\Lie{g}$-irreducible} if there is no proper nontrivial subspace $W'\subset W$ which is $\Lie{g}$-invariant.
When $\Lie{g}$ is a semisimple real Lie algebra, it possesses a restricted root system, thus exhibiting the property that any representation $V$ for $\Lie{g}$ decomposes into $\Lie{g}$-irreducible sub-representations.

In this article, our focus lies on the adjoint representation for the semisimple Lie algebra $\Lie{so}(p,q)$ on $\Lie{sl}_d(\RR)$, where $d=p+q$.
The theorem below is likely familiar to experts; however, given the callenge in locating references, we will present a detailed proof.

\begin{theorem}\label{Main Theorem}
Let $d=p+q\ge 2$ with $p,q\ge 0$. The subalgebra $\Lie{sl}_d(\RR)$ consists of two $\Lie{so}(p,q)$-irreducible subspaces.
\end{theorem}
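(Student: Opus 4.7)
The plan is to exhibit an explicit splitting $\Lie{sl}_d(\RR) = \Lie{so}(p,q) \oplus \Lie{p}$, verify its invariance under $\Ad(\Lie{so}(p,q))$, and then prove irreducibility of each summand via weight-space analysis.

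First, I would introduce $J = \diag(I_p, -I_q)$ together with the involution $\theta\colon X \mapsto -J X^{T} J$ on $\Lie{sl}_d(\RR)$. One checks that $\theta$ is a Lie algebra automorphism of order two, so $\Lie{sl}_d(\RR) = \Lie{so}(p,q) \oplus \Lie{p}$ arises as the decomposition into its $(+1)$- and $(-1)$-eigenspaces, where $\Lie{p} = \{X \in \Lie{sl}_d(\RR) : J X = X^{T} J\}$ is the space of traceless $J$-symmetric matrices. Since $\theta$ is an automorphism and fixes $\Lie{so}(p,q)$ pointwise, the bracket relations $[\Lie{so}(p,q), \Lie{so}(p,q)] \subseteq \Lie{so}(p,q)$ and $[\Lie{so}(p,q), \Lie{p}] \subseteq \Lie{p}$ are automatic, giving $\Ad(\Lie{so}(p,q))$-invariance of both summands.

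For irreducibility, I would fix a maximal $\RR$-split Cartan subalgebra $\Lie{a} \subseteq \Lie{so}(p,q)$ of dimension $\min(p,q)$ and decompose $\Lie{sl}_d(\RR)$ into weight spaces under $\Ad(\Lie{a})$. Because $\Lie{a} \subseteq \Lie{so}(p,q)$ and $\theta$ restricts to the identity on $\Lie{a}$, each weight space lies entirely in one of $\Lie{so}(p,q)$ or $\Lie{p}$, so the two summands inherit compatible weight decompositions. The crux is then to show that inside each summand, any nonzero weight vector generates the full summand under iterated brackets with the restricted root vectors of $\Lie{so}(p,q)$. Granted this, any nonzero $\Ad(\Lie{so}(p,q))$-invariant subspace of $\Lie{so}(p,q)$ or $\Lie{p}$ contains at least one weight vector (by projecting onto a suitable weight space) and hence must be the whole summand.

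The hardest step will be the combinatorial verification that the root vectors of $\Lie{so}(p,q)$ connect all weight spaces inside $\Lie{p}$, including the potentially large zero-weight space (the centralizer of $\Lie{a}$ in $\Lie{p}$). I would parametrize $\Lie{p}$ by explicit matrix coordinates indexed by pairs $(i,j)$, identify a family of root vectors of the form $E_{ij} \pm E_{ji}$ sitting in $\Lie{so}(p,q)$, and trace how repeated brackets with them link every admissible pair of indices; a similar but slightly different bookkeeping is needed for $\Lie{so}(p,q)$ itself. Low-rank or degenerate signatures, such as $\Lie{so}(1,1)$, $\Lie{so}(2,2)$, and $\Lie{so}(4)$, where the algebra fails to be simple or even semisimple, I expect to handle by separate direct computation, since the generic root-system argument does not apply in those exceptional ranks.
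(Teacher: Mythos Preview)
Your approach coincides with the paper's: the paper also splits $\Lie{sl}_d(\RR)=\Lie{so}(p,q)\oplus\Lie{s}$ (your $\Lie{p}$ is exactly the paper's $\Lie{s}$), computes the weight decomposition of $\Lie{s}$ under the maximal split torus $\Lie{a}$, and then checks by hand that root vectors of $\Lie{so}(p,q)$ connect all weight spaces of $\Lie{s}$, including the zero-weight space. One small correction: your assertion that ``each weight space lies entirely in one of $\Lie{so}(p,q)$ or $\Lie{p}$'' is false---for example the weight $f_i$ (when $p>q$) has a $(p-q)$-dimensional contribution in each summand---but the conclusion you need (that each summand inherits its own weight decomposition) is still correct, since $\theta$ commutes with $\ad(\Lie{a})$ and therefore preserves each weight space, which then splits into its $\pm 1$-eigenspaces. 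Your flagging of the low-rank signatures for separate treatment is more careful than the paper, which does not discuss them (and indeed the statement as written is problematic at $(p,q)=(1,1)$, where $\Lie{s}$ decomposes into two one-dimensional weight spaces under the abelian $\Lie{so}(1,1)$).
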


As a corollay, we can re-obtain the well-known result regarding the maximality of $\SO(p,q)$ on $\SL_d(\RR)$.

\begin{corollary}
The group $\SO(p,q)^\circ$ is maximal among connected subgroups of $\SL_d(\RR)$, where $d=p+q\ge2$
\end{corollary}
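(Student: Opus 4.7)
The plan is to reduce the maximality claim to the Lie-algebra decomposition provided by Theorem~A. First I would take a connected Lie subgroup $H\le\SL_d(\RR)$ containing $\SO(p,q)^\circ$, and pass to its Lie algebra $\Lie{h}\subseteq\Lie{sl}_d(\RR)$. Since $\SO(p,q)^\circ\subseteq H$, conjugation by elements of $\SO(p,q)^\circ$ stabilises $H$ and hence $\Lie{h}$; equivalently, $\Lie{h}$ is $\ad(\Lie{so}(p,q))$-invariant, and obviously $\Lie{so}(p,q)\subseteq\Lie{h}$.

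Next I would apply Theorem~A in the form $\Lie{sl}_d(\RR)=\Lie{so}(p,q)\oplus W$ with $W$ an irreducible $\ad(\Lie{so}(p,q))$-invariant complement. For any $X\in\Lie{h}$ I would decompose $X=S+V$ with $S\in\Lie{so}(p,q)\subseteq\Lie{h}$ and $V\in W$; then $V=X-S\in\Lie{h}$, giving
\[
\Lie{h}=\Lie{so}(p,q)\oplus(\Lie{h}\cap W).
\]
The summand $\Lie{h}\cap W$ is $\ad(\Lie{so}(p,q))$-invariant, so by irreducibility of $W$ it equals $0$ or $W$, whence $\Lie{h}$ is either $\Lie{so}(p,q)$ or $\Lie{sl}_d(\RR)$.

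Finally, a connected Lie subgroup of a Lie group is uniquely determined by its Lie algebra, so $H$ equals $\SO(p,q)^\circ$ or $\SL_d(\RR)$, which is precisely the asserted maximality. All the real content sits in Theorem~A; the remaining Lie-correspondence step is routine, and I do not foresee any substantive obstacle beyond checking that the Theorem~A decomposition does present $\Lie{so}(p,q)$ itself as one of the two irreducible summands, which is visible from the explicit construction in its proof.
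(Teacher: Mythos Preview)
Your argument is correct and is exactly the deduction the paper intends: the paper gives no separate proof of the corollary, simply presenting it as an immediate consequence of Theorem~\ref{Main Theorem} (more precisely, of Proposition~4.1, the $\ad(\Lie{so}(p,q))$-irreducibility of the complement $\Lie{s}$) via the Lie-correspondence argument you spell out. Your closing worry is harmless, since your proof only uses irreducibility of the complement $W=\Lie{s}$, not of $\Lie{so}(p,q)$ itself.
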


The paper is organized as follows. In Section 2, we revisit the restricted root system of $\Lie{so}(p,q)$. Based on this root system, we obtain all of weights and corresponding weight vectors of the compliment of $\Lie{so}(p,q)$ in $\Lie{sl}_d(\RR)$ in Section 3. For readers seeking a quick overview, we display root and weight vectors for a concrete example, $\Lie{so}(4,2)$, in Appendix. Finally, we present the proof of Theorem~\ref{Main Theorem} in Section 4.

\subsection*{Acknowledgements}
I would like to be thankful to Seonhee Lim and Keivan Mallahi-Karai for their encouragement throughout this project. The author acknowledges support from a KIAS Individual Grant MG088401 at Korea Institute for Advanced Study.

\section{Restricted Root Space Decomposition of $\Lie{so}(p,q)$}\label{Sec: Root Decomposition}

Let us briefly recall the restricted root system for $\Lie{so}(p,q)$, as introduced in \cite[Page 372-373]{Knapp}.
Denote elements of $\Lie{sl}_d(\RR)$ by two by two block matrix
\[
X=\left(\begin{array}{cc}
A & B\\
C & D \end{array}\right)
\]
with $A\in \Mat_p(\RR)$, $B\in \Mat_{p,q}(\RR)$, $C\in \Mat_{q,p}(\RR)$, and $D\in \Mat_q(\RR)$,  where $p, q\ge 0$ and $p+q=d$.
For the sake of convenience, we will take basis of $\Mat_d(\RR)$ as $A_{i,j}$, $B_{i,j}$, $C_{i,j}$ and $D_{i,j}$ as follows: if we let $\{E_{i,j}\}_{1\le i,j\le d}$ be a standard basis of $\Mat_d(\RR)$, 
\[\begin{array}{ll}
A_{i,j}=E_{p+1-i,p+1-j}, & B_{i,j}=E_{p+1-i,p+j}\;\text{and}\\
C_{i,j}=E_{p+i, p+1-j}, & D_{i,j}=E_{p+i, p+j}.
\end{array}
\]

The Lie algebra $\Lie{so}(p,q)$ consists of matrices $X$ in $\Lie{sl}_d(\RR)$ for which
$A$, $D$ are skew-symmetric and $B^t=C$, so that $\dim\Lie{so}(p,q)=\frac 1 2 (p(p-1)+q(q-1))+pq$.
There is a natural involution $\theta$ in $\Lie{so}(p,q)$ defined as the matrix transpose, leading the Cartan decomposition $\Lie{so}(p,q)=\Lie{k}\oplus \Lie{p}$, where $\Lie{k}$ denotes the set of $X\in \Lie{so}(p,q)$ for which $B$ (and consequently $D$) is a zero matrix, and $\Lie{p}$ represents the set of those $X$ whose $A$- and $D$-components are zero matrices.
Then one can take a maximal abelian subalgebra $\Lie{a}$ of $\Lie{p}$ whose elements are of the form
\begin{equation}\label{matrix F}
F=\left(\begin{array}{c|c}
 & \hspace{-0.08in}\begin{array}{ccc}
  & & \\
  & & a_{q}\\
  & \rddots & \\
  a_1 & & \end{array}\hspace{-0.08in}\\
  \hline
  \hspace{-0.08in}\begin{array}{cccc}
   & & & a_1 \\
   & & \rddots & \\
   & a_q& &
  \end{array}\hspace{-0.08in}
  & 
\end{array}\right)=\sum_{i=1}^q a_i (B_{i,i}+C_{i,i}).
\end{equation}
Here, withought loss of generality, we assume that $p\ge q$. Denote by $$\{f_i: i=1,\ldots, q\}$$ the dual basis of $\Lie{a}$ given by $f_i(B_{j,j}+C_{j,j})=\delta_{ij}$.
We say that a nonzero $\lambda\in \Lie{a}^*$ is a restricted root for $(\Lie{g}, \Lie{a})$ if there is a nonzero $X\in \Lie{g}$ such that for all $F\in \Lie{a}$,
\[
[F, X]=\lambda(F)X.
\]

\begin{proposition}\label{Thm: Root}
The restricted roots of $(\Lie{so}(p,q), \Lie{a})$ are $\pm f_i$ for $1\le i\le q$ (if $p\neq q$), $\pm f_i\pm f_j$, and $\pm f_i \mp f_j$  for $1\le i< j\le q$.
\begin{center}
\begin{tabular}{c|c|c|c}
\hline
 & \# of this type & $\dim$ of each root space & \\
 \hline
 $\pm f_i$ & 2q & p-q & 2q(p-q) \\[0.05in]
 $\pm f_i \pm f_j$ & q(q-1)& 1 & q(q-1) \\[0.05in]
 $\pm f_i \mp f_j$ & q(q-1) & 1 & q(q-1) \\[0.05in]
 0 & 1 & $\frac 1 2 (p-q)(p-q-1)+ q$ & $\frac 1 2 (p-q)(p-q-1)+ q$\\[0.05in]
 \hline 
 & & & $\frac 1 2 (p(p-1)+q(q-1))+pq=\dim\Lie{so}(p,q)$\\ \hline
\end{tabular}
\end{center}
(Here, notice that $0$ is not strictly a restricted root; however, we include it for convenience.)

For each restricted root (including $0$), a basis of the root space $\Lie{so}(p,q)_\lambda$ can be taken as follows. See  also Appendix for the matrix form in case of $\Lie{so}(4,2)$.
\begin{enumerate}
\item[i)] $\lambda=\pm f_i$ ($1\le i\le q$).

For $\lambda=f_i$, $\Lie{so}(p,q)_\lambda$ is generated by
\[
H(\lambda)_\ell=A_{q+\ell, i}-B_{q+\ell, i}-A_{i, q+\ell}-C_{i, q+\ell},\; 1\le \ell \le p-q,
\]
and for $\lambda=-f_i$,
\[
\Lie{so}(p,q)_\lambda=
\left\langle H(\lambda)_\ell=A_{q+\ell, i}+B_{q+\ell, i}-A_{i, q+\ell}+C_{i, q+\ell}: 1\le \ell \le p-q \right\rangle.
\]

\item[ii)] $\lambda=\pm f_i\pm f_j$ ($1\le i<j\le q$).

For $\lambda=f_i+f_j$,
\[
\Lie{so}(p,q)_\lambda=\RR.\left(-A_{i,j}+B_{i,j}+A_{j,i}-B_{j,i}-C_{i,j}+D_{i,j}+C_{j,i}-D_{j,i}\right),
\]
and for $\lambda=-f_i-f_j$,
\[
\Lie{so}(p,q)_\lambda=\RR.\left(-A_{i,j}-B_{i,j}+A_{j,i}+B_{j,i}+C_{i,j}+D_{i,j}-C_{j,i}-D_{j,i}\right).
\]

\item[iii)] $\lambda=\pm f_i \mp f_j$ ($1\le i<j\le q$).

For $\lambda=f_i-f_j$,
\[
\Lie{so}(p,q)_\lambda=\RR.\left(-A_{i,j}-B_{i,j}+A_{j,i}-B_{j,i}-C_{i,j}-D_{i,j}-C_{j,i}+D_{j,i}\right),
\]
and for $\lambda=-f_i+f_j$,
\[
\Lie{so}(p,q)_\lambda=\RR.\left(-A_{i,j}+B_{i,j}+A_{j,i}+B_{j,i}+C_{i,j}-D_{i,j}+C_{j,i}+D_{j,i}\right).
\]

\item[iv)] $\lambda=0$.

Following the notation of \cite{Knapp}, $\Lie{so}(p,q)_0=\Lie{m}\oplus \Lie{a}$, where $$\Lie{m}=\sum_{1\le i<j\le p-q} \RR.(A_{q+i,-q+j}-A_{q+j, -q+i})$$ is the centralizer of $\Lie{a}$ in $\Lie{k}$, which is isomorphic to $\Lie{so}(p-q)$ in our case.
\end{enumerate}
\end{proposition}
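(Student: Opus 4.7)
My plan is a straightforward verification by matrix computation, organized around the bracket formula
\[
[E_{a,b}, E_{c,d}] = \delta_{bc} E_{a,d} - \delta_{ad} E_{c,b}
\]
in $\Lie{gl}_d(\RR)$, translated via the identifications $A_{i,j} = E_{p+1-i, p+1-j}$, $B_{i,j} = E_{p+1-i, p+j}$, $C_{i,j} = E_{p+i, p+1-j}$, $D_{i,j} = E_{p+i, p+j}$. The basis vector $B_{j,j} + C_{j,j} = E_{p+1-j, p+j} + E_{p+j, p+1-j}$ of $\Lie{a}$ interacts with an elementary matrix $E_{a,b}$ only when one of $a, b$ equals $p+1-j$ or $p+j$, so each bracket $[B_{j,j} + C_{j,j}, E_{a,b}]$ produces at most two terms and the whole computation stays compact.

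First I would check that every candidate eigenvector $H(\lambda)$ listed in items (i)--(iii) actually lies in $\Lie{so}(p,q)$: in each case the $A$- and $D$-blocks are visibly antisymmetric in their index pair, and the $B$- and $C$-blocks match under the transpose relation $B^t = C$ (for instance, $-B_{q+\ell,i}$ pairs with $-C_{i,q+\ell}$ in item (i)). Then, for each family I would expand $H(\lambda)$ in the elementary basis, compute $[B_{j,j}+C_{j,j}, H(\lambda)]$ term by term, and repackage the resulting sum of $E_{a,b}$'s back into the $(A, B, C, D)$ notation. The outcome should be $[B_{j,j}+C_{j,j}, H(\lambda)] = c_j(\lambda)\, H(\lambda)$, where $c_j(\lambda) = \delta_{ij}$ for $\lambda = f_i$, $c_j(\lambda) = \delta_{ij} + \delta_{kj}$ for $\lambda = f_i + f_k$, and analogously for the negative and mixed-sign cases. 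Summing against $a_j$ then yields $[F, H(\lambda)] = \lambda(F)\, H(\lambda)$ for every $F = \sum_j a_j (B_{j,j}+C_{j,j}) \in \Lie{a}$, which identifies $\lambda$ as the claimed root.

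For completeness, I would invoke the dimension count in the table. The listed vectors lie in distinct weight spaces or, within a fixed weight space, occupy distinct coordinate slots, so they are manifestly linearly independent; their total count is $2q(p-q) + 2q(q-1) + \tfrac12(p-q)(p-q-1) + q$, which a short arithmetic simplification shows equals $\tfrac12\bigl(p(p-1) + q(q-1)\bigr) + pq = \dim \Lie{so}(p,q)$. Hence there is no room for additional weights or additional root vectors, and the listed families exhaust every root space of $\ad(\Lie{a})$ on $\Lie{so}(p,q)$.

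Finally, for item (iv), I would observe that $\Lie{a} \subset \Lie{so}(p,q)_0$ because $\Lie{a}$ is abelian, and that the complementary piece is the centralizer $\Lie{m}$ of $\Lie{a}$ in $\Lie{k}$. An element $X \in \Lie{k}$ has $B = C = D = 0$ and $A$ antisymmetric; imposing $[B_{j,j}+C_{j,j}, X] = 0$ for every $1 \le j \le q$ forces every entry of the $A$-block whose row or column index lies in the bottom-right $q \times q$ sub-block to vanish, leaving a free antisymmetric matrix on the remaining $(p-q) \times (p-q)$ block, which is precisely the copy of $\Lie{so}(p-q)$ described. The main obstacle throughout is purely bookkeeping: the reversed-index convention $A_{i,j} = E_{p+1-i, p+1-j}$, combined with the adjacency of the $p$- and $q$-ranges in the middle of the matrix, makes the Kronecker-delta conditions in each bracket delicate, and a sign or index slip can propagate into apparently inconsistent weights.
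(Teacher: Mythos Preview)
Your proposal is correct and follows the natural approach. Note, however, that the paper does not actually prove this proposition: it is presented as a recollection of standard material from Knapp's book, with the explicit root vectors supplied for later use. Your method---direct verification of $[F,H(\lambda)]=\lambda(F)H(\lambda)$ via the elementary-matrix bracket formula, followed by a dimension count to rule out further roots---is precisely the argument the paper employs for the companion result on the weight decomposition of $\Lie{s}$ (Proposition~\ref{Thm: Weight}), where one case is worked out in full and the rest are declared analogous.
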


\section{Weight Decomposition of $\Lie{sl}_d(\RR)$}\label{Sec: Weight Decomposition}

Define
\begin{equation}\label{Lie algebra s}
\Lie{s}=\left\{X=\left(\begin{array}{cc}
A & B\\
C & D \end{array}\right):\begin{array}{c}
\tr(A)+\tr(B)=0,\\
A=A^t,\; D=D^t\\
B=-C^t.
\end{array}\right\}.
\end{equation}
It is not hard to check that $\Lie{sl}_d(\RR)=\Lie{so}(p,q)\oplus \Lie{s}$, and $\Lie{s}$ is invariant under the adjoint action of $\Lie{so}(p,q)$.

Following the approach in Section~\ref{Sec: Root Decomposition}, we will present the weight decomposition of $\Lie{s}$ for the adjoint representation of $\Lie{so}(p,q)$ below, providing a comprehensive overview of the weight decomposition of $\Lie{sl}_d(\RR)$.

\begin{proposition}\label{Thm: Weight}
The weights of $\Lie{s}$ for the adjoint representation of $\Lie{so}(p,q)$ are $\pm f_i$ (if $p\neq q$) and $\pm 2f_i$ for $1\le i\le q$, $\pm f_i\pm f_j$ and $\pm f_i \mp f_j$  for $1\le i< j\le q$ and $0$.
\begin{center}
\begin{tabular}{c|c|c|c}
\hline
 & \# of this type & $\dim$ of each root space & \\
 \hline
 $\pm f_i$ & 2q & p-q & 2q(p-q) \\[0.05in]
 $\pm 2f_i$ & 2q & 1 & 2q \\[0.05in]
 $\pm f_i \pm f_j$ & q(q-1)& 1 & q(q-1) \\[0.05in]
 $\pm f_i \mp f_j$ & q(q-1) & 1 & q(q-1) \\[0.05in]
 0 & 1 & $\frac 1 2 (p-q)(p-q+1)+ q-1$ & $\frac 1 2 (p-q)(p-q+1)+ q-1$\\[0.05in]
 \hline 
 & & & $\frac 1 2 (p(p+1)+q(q+1))+pq-1=\dim\Lie{s}$\\ \hline
\end{tabular}
\end{center}

For each weight $\omega$, a basis of the weight space $\Lie{s}_\omega$ can be taken as follows (see Appendix for the matrix form in case of $\Lie{so}(4,2)$).
\begin{enumerate}
\item[i)] $\omega=\pm f_i$ ($1\le i\le q$).

For $\omega=f_i$, $\Lie{s}_\omega$ is generated by
\[
S(\omega)_\ell=A_{q+\ell, i}-B_{q+\ell, i}+A_{i, q+\ell}+C_{i, q+\ell},\; 1\le \ell \le p-q,
\]
and for $\omega=-f_i$,
\[
\Lie{s}_\omega=
\left\langle S(\omega)_\ell=A_{q+\ell, i}+B_{q+\ell, i}+A_{i, q+\ell}-C_{i, q+\ell}: 1\le \ell \le p-q \right\rangle.
\]

\item[ii)] $\omega=\pm 2f_i$ ($1\le i \le q$).

For $\omega=2f_i$,
\[
\Lie{s}_\omega=
\RR.\left(A_{i,i}-B_{i,i}+C_{i,i}-D_{i,i}\right),
\] 
and for $\omega=-2f_i$,
\[
\Lie{s}_\omega=
\RR.\left(A_{i,i}+B_{i,i}-C_{i,i}-D_{i,i}\right).
\]

\item[iii)] $\omega=\pm f_i\pm f_j$ ($1\le i<j\le q$).

For $\omega=f_i+f_j$,
\[
\Lie{s}_\omega=\RR.\left(-A_{i,j}+B_{i,j}-A_{j,i}+B_{j,i}-C_{i,j}+D_{i,j}-C_{j,i}+D_{j,i}\right),
\]
and for $\lambda=-f_i-f_j$,
\[
\Lie{s}_\omega=\RR.\left(A_{i,j}+B_{i,j}+A_{j,i}+B_{j,i}-C_{i,j}-D_{i,j}-C_{j,i}-D_{j,i}\right),
\]

\item[iv)] $\lambda=\pm f_i \mp f_j$ ($1\le i<j\le q$).

For $\lambda=f_i-f_j$,
\[
\Lie{s}_\omega=\RR.\left(A_{i,j}+B_{i,j}+A_{j,i}-B_{j,i}+C_{i,j}+D_{i,j}-C_{j,i}+D_{j,i}\right).
\]

and for $\lambda=-f_i+f_j$,
\[
\Lie{s}_\omega=\RR.\left(A_{i,j}-B_{i,j}+A_{j,i}+B_{j,i}-C_{i,j}+D_{i,j}+C_{j,i}+D_{j,i}\right).
\]

\item[v)] $\omega=0$.

The weight space $\Lie{s}_0$ is composed of the first $(p-q)\times (p-q)$ symmetric matrices and diagonal matrices. One can take a basis of $\Lie{s}_0$ as follows.
\[\begin{gathered}
A_{q+i, q+j}+A_{q+j, q+i}\; (1\le i<j\le p-q),\\
2A_{q+i,q+i}+(A_{1,1}+D_{1,1})\; (1\le i\le p-q),\;\text{and}\\
(A_{i,i}+D_{i,i})-(A_{i+1, i+1}+D_{i+1,i+1})\; (1\le i\le q-1).
\end{gathered}\]

\end{enumerate}
\end{proposition}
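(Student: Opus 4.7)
The plan is to verify Proposition~\ref{Thm: Weight} by direct computation, closely paralleling the proof of Proposition~\ref{Thm: Root}. Because each generator $B_{k,k}+C_{k,k}$ of $\Lie{a}$ is symmetric as a $d\times d$ matrix, $\ad(F)$ is diagonalizable on $\Lie{sl}_d(\RR)$ for every $F\in\Lie{a}$; hence $\Lie{s}$, being $\ad(\Lie{a})$-invariant, splits as a direct sum of simultaneous $\Lie{a}$-eigenspaces. It therefore suffices to exhibit, for each listed weight $\omega$, an explicit basis of eigenvectors inside $\Lie{s}$ with eigenvalue $\omega(F)$, and to check that the total dimension equals $\dim\Lie{s}=\tfrac12 p(p+1)+\tfrac12 q(q+1)+pq-1$.

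The computation re-uses the bracket table from Proposition~\ref{Thm: Root}. Since $B_{k,k}+C_{k,k}=E_{p+1-k,p+k}+E_{p+k,p+1-k}$ is a rank-two ``swap'' supported on rows and columns $p+1-k$ and $p+k$, every commutator $[B_{k,k}+C_{k,k},E_{i,j}]$ is a signed sum of at most two elementary matrices obtained by swapping indices, and in the $A,B,C,D$-block notation this becomes a short list of Kronecker-delta identities. One plugs each candidate vector from items (i)--(v) into $[F,\cdot]$ with $F=\sum_i a_i(B_{i,i}+C_{i,i})$ and verifies that, by design, all cross-terms cancel and exactly $\omega(F)$ times the original vector survives. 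Simultaneously one must check membership in $\Lie{s}$: pairs $(A_{i,j},A_{j,i})$ and $(D_{i,j},D_{j,i})$ must carry equal signs (to encode $A=A^t,\,D=D^t$), pairs $(B_{i,j},C_{j,i})$ opposite signs (to encode $B=-C^t$), and for $\omega=0$ one must also enforce tracelessness. A useful consistency check is that the $\Lie{s}$-vectors for $\pm f_i$ and $\pm f_i\pm f_j$ differ from their $\Lie{so}(p,q)$-counterparts in Proposition~\ref{Thm: Root} only in the signs dictated by reversing these block-symmetry conditions.

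The genuinely new features of $\Lie{s}$ --- with no $\Lie{so}(p,q)$-analogue to cross-check against --- are the weights $\pm 2f_i$ and the enlarged zero-weight space. For $\pm 2f_i$, one identifies the unique combination of $A_{i,i},B_{i,i},C_{i,i},D_{i,i}$ that lies in $\Lie{s}$ and is an eigenvector of $B_{i,i}+C_{i,i}$ with eigenvalue $\pm 2$ (while being annihilated by $B_{j,j}+C_{j,j}$ for $j\neq i$). For $\omega=0$, the relevant subspace consists of all $M\in\Lie{s}$ invariant under conjugation by every swap $B_{k,k}+C_{k,k}$; decomposing this into symmetric matrices supported on the first $(p-q)\times(p-q)$ block plus diagonal matrices whose paired entries agree across each swap, intersected with the trace-zero condition, yields the claimed dimension $\tfrac12(p-q)(p-q+1)+q-1$, from which a specific basis is read off directly. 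The main obstacle throughout is combinatorial sign-tracking rather than any conceptual subtlety; once all cases are checked, the final dimension tally matches $\dim\Lie{s}$ and the weight decomposition is complete.
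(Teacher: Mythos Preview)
Your proposal is correct and follows essentially the same approach as the paper: the paper's proof likewise verifies one representative case (namely $\omega=f_i$) by an explicit elementary-matrix computation of $[F,S(f_i)_\ell]$, declares that the remaining cases are handled in the same manner, and then invokes the dimension count to conclude that the listed weight vectors exhaust $\Lie{s}$. Your outline is in fact more detailed than the paper's own argument, which omits any separate discussion of the $\pm 2f_i$ and zero-weight cases.
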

\begin{proof}
i) For $F\in \Lie{a}$ as in \eqref{matrix F}, by putting $\ell'=\ell+q$,
\[\begin{split}
&\ad(F)(S(f_i)_\ell)=[F, S(f_i)_\ell]\\
&=\left(\sum_{k=1}^q a_k(B_{k,k}+C_{k,k})\right)\left(A_{q+\ell, i}-B_{q+\ell, i}+A_{i, q+\ell}+C_{i, q+\ell}\right)\\
&\hspace{0.8in}-\left(A_{q+\ell, i}-B_{q+\ell, i}+A_{i, q+\ell}+C_{i, q+\ell}\right)\left(\sum_{k=1}^q a_k(B_{k,k}+C_{k,k})\right)\\
&=\sum_{k=1}^q a_k\left[(E_{p+k, p+1-k}+E_{p+1-k,p+k})(E_{p+1-\ell',p+1-i}-E_{p+1-\ell',p+i}+E_{p+1-i,p+1-\ell'}+E_{p+i,p+1-\ell'})\right.\\
&\hspace{0.65in}\left.-(E_{p+1-\ell',p+1-i}-E_{p+1-\ell',p+i}+E_{p+1-i,p+1-\ell'}+E_{p+i,p+1-\ell'})(E_{p+k, p+1-k}+E_{p+1-k,p+k})\right]\\
&=a_i(E_{p+1-\ell',p+1-i}-E_{p+1-\ell',p+i}+E_{p+1-i,p+1-\ell'}+E_{p+i,p+1-\ell'})\\
&=f_i(F)\left(A_{q+\ell, i}-B_{q+\ell, i}+A_{i, q+\ell}+C_{i, q+\ell}\right)
=f_i(F)S(f_i)_\ell.
\end{split}\]

The remaining cases can be verified in a similar manner. Given that the total number of linearly independent weight vectors is equal to  $\dim\Lie{s}$,  we can thus derive the weight decomposition of $\Lie{s}$.
\end{proof}

\section{Proof of Main Theorem}\label{Sec: Proof}
Theorem~\ref{Main Theorem} follows directly from the proposition below.

\begin{proposition}
The subspace $\Lie{s}$ defined in \eqref{Lie algebra s} is $\Lie{so}(p,q)$-irreducible.
\end{proposition}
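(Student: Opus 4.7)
Let $W\subseteq\Lie{s}$ be a nonzero $\Lie{so}(p,q)$-invariant subspace; the plan is to show $W=\Lie{s}$ by a weight-space chasing argument. Since $\Lie{a}\subseteq\Lie{so}(p,q)$ acts on $\Lie{s}$ by commuting diagonalisable operators with real eigenvalues, the invariant subspace $W$ splits as $W=\bigoplus_{\omega}(W\cap\Lie{s}_\omega)$, indexed by the weights of Proposition~\ref{Thm: Weight}, so $W$ already contains at least one nonzero weight vector. I first want to ensure that some such vector has nonzero weight. If instead $W\subseteq\Lie{s}_0$, then for every restricted root $\alpha$ and every $X\in\Lie{so}(p,q)_\alpha$ and $v\in W$, the bracket $[X,v]$ lies in $W\cap\Lie{s}_\alpha=0$; combined with $[\Lie{a},v]=0$ and the fact that $\Lie{a}$ together with all restricted root spaces generate $\Lie{so}(p,q)$, this forces $v$ to centralize $\Lie{so}(p,q)$ in $\Lie{sl}_d(\RR)$. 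Schur's lemma applied to the standard representation of $\Lie{so}(p,q)$ on $\RR^d$ (absolutely irreducible for $d\ge 3$) then gives $v=0$, a contradiction. The case $q=0$ is compact and reduces to the classical irreducibility of traceless symmetric matrices under $\Lie{so}(p)$, which I treat separately.

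Starting now from a nonzero weight vector of nonzero weight in $W$, I generate every other weight vector of Proposition~\ref{Thm: Weight} by successively bracketing with suitable root vectors $H(\alpha)\in\Lie{so}(p,q)_\alpha$ taken from Proposition~\ref{Thm: Root}. Each such bracket lies in $\Lie{s}_{\omega+\alpha}$, and a direct computation using only $E_{ij}E_{kl}=\delta_{jk}E_{il}$ shows that whenever $\omega+\alpha$ is itself a weight of $\Lie{s}$, the result is a nonzero scalar multiple of the corresponding vector $S(\omega+\alpha)$. By chaining together brackets with root vectors of types $\pm f_i\pm f_j$, $\pm f_i\mp f_j$ (available for $q\ge 2$) and $\pm f_i$ (available for $p>q$), one first reaches the highest weight vector $S(2f_1)=A_{1,1}-B_{1,1}+C_{1,1}-D_{1,1}$ and then descends from it to every other $S(\omega)$. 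Two extra inputs are needed. The weight spaces $\Lie{s}_{\pm f_i}$ are $(p-q)$-dimensional, so one uses that the centralizer $\Lie{m}\cong\Lie{so}(p-q)\subseteq\Lie{so}(p,q)_0$ acts on each $\Lie{s}_{\pm f_i}$ as the standard representation of $\Lie{so}(p-q)$ on $\RR^{p-q}$, which is irreducible for $p-q\ge 2$ (the cases $p-q\le 1$ being trivial); hence a single nonzero vector in $W\cap\Lie{s}_{\pm f_i}$ forces the entire weight space into $W$. For the zero weight space $\Lie{s}_0$, once $W$ contains every nonzero weight space, the brackets $[\Lie{so}(p,q)_\alpha,\Lie{s}_{-\alpha}]\subseteq W\cap\Lie{s}_0$ produce explicit elements of $\Lie{s}_0$ which, after comparison with the basis of Proposition~\ref{Thm: Weight}(v), span the full $\bigl(\tfrac{1}{2}(p-q)(p-q+1)+q-1\bigr)$-dimensional space.

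The main obstacle is not conceptual but computational. Once a specific chain of root brackets is chosen, one has to verify that every intermediate commutator is in fact nonzero and equal to the claimed scalar multiple of the target weight vector in Proposition~\ref{Thm: Weight}: the sign patterns there are rather intricate, so this is not automatic. Similarly, one has to check that the zero-weight brackets actually span $\Lie{s}_0$ and do not cut out a proper $\Lie{m}$-submodule. Both are routine but lengthy bookkeeping exercises in the explicit basis set up in the preceding sections; once completed, the ingredients above yield $W=\Lie{s}$, and $\Lie{s}$ is $\Lie{so}(p,q)$-irreducible.
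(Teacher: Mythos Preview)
Your proposal is correct and follows the same weight-chasing strategy as the paper: both arguments start from the weight decomposition of Proposition~\ref{Thm: Weight} and then connect all weight spaces by bracketing with suitable root vectors from Proposition~\ref{Thm: Root}.

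The tactical choices differ slightly. The paper works entirely with explicit bidirectional bracket relations: it lists arrows such as $[S(\pm 2f_i)]\leftrightarrow S(\pm f_i)_\ell$ (one for each $\ell$) and $[S(-f_1)_i]\leftrightarrow[2A_{q+i,q+i}+(A_{1,1}+D_{1,1})]$, so that the multi-dimensional weight spaces $\Lie{s}_{\pm f_i}$ and the zero-weight space $\Lie{s}_0$ are handled by the same device as the one-dimensional ones, with no outside input. You instead invoke Schur's lemma (for the standard representation of $\Lie{so}(p,q)$) to rule out $W\subseteq\Lie{s}_0$, and the irreducibility of the standard $\Lie{so}(p-q)$-module to fill out each $\Lie{s}_{\pm f_i}$ once a single vector is obtained. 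Both routes are valid; the paper's stays entirely elementary at the price of a few more bracket verifications, while yours trades those computations for two standard representation-theoretic facts. One small point: your claim that $\Lie{a}$ together with the restricted root spaces generate $\Lie{so}(p,q)$ needs the observation that the subalgebra they span is a nonzero ideal (it is $\Lie{m}$-stable) and $\Lie{so}(p,q)$ is simple in the relevant range, since $\Lie{m}$ is not a priori contained in that span.
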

\begin{proof}
It is sufficient to show that any weight vectors of $\Lie{s}$ provided in Proposition~\ref{Thm: Weight} is mapped to another weight vector by the adjoint action of $\Lie{so}(p,q)$. Let $[H(\lambda)]$ and $[S(\omega)]$ are the class of root vectors for $\lambda$ and weight vectors for $\omega$, respectively, if the root or weight space is one-dimensional, and $H(\lambda)_\ell$ and $S(\omega)_\ell$ are as in Proposition~\ref{Thm: Root} and Proposition~\ref{Thm: Weight}, respectively. 
The following, for instance, demonstrates the transitivity of the set of all nonzero weights.
\begin{itemize}
\item $[S(2f_1)] \xleftrightarrow[H(-f_1\mp f_j)]{H(f_1\mp f_j)} [S(f_1\pm f_j)] \xleftrightarrow[H(-f_1\mp f_j)]{H(f_1\pm f_j)} [S(\pm 2f_j)]$;
\item $[S(\mp f_i \pm f_j)]\xleftrightarrow[H(\pm f_j+f_1)]{H(\mp f_i-f_1)}[S(f_1\pm f_j)]\xleftrightarrow[H(\pm f_i-f_1)]{H(\mp f_i+f_1)} [S(\pm f_i\pm f_j)]\xleftrightarrow [H(-f_1\mp f_i)]{H(f_1\pm f_i)}[S(-f_1\pm f_j)]$;
\item $[S(\pm 2f_i)]\xleftrightarrow[H(\mp f_i)_\ell]{H(\pm f_i)_\ell}[S(\pm f_i)_\ell]$
\end{itemize}
It can be verified that each mapping sends a nonzero weight vectors to another nonzero weight vector. 
As for the connection to weight vectors of zero weight, we have the following observaton:
\begin{itemize}
\item $[S(-f_1)_j]\xleftrightarrow[H(f_1)_i]{H(-f_1)_i} [A_{q+i, q+j}+A_{q+j, q+i}]$
\item $[S(-f_1)_i]\xleftrightarrow[H(f_1)_i]{H(-f_1)_i} [2A_{q+i,q+i}+(A_{1,1}+D_{1,1})]$
\item $[S(-f_i-f_{i+1})]\xleftrightarrow[H(f_i+f_{i+1})]{H(-f_i-f_{i+1})} [A_{i,i}+D_{i,i}-(A_{i+1,i+1}+D_{i+1, i+1})]$.
\end{itemize}
\end{proof}

\newpage
\section*{Appendix}\label{Sec: Appendix}
We now illustrate the matrix form of root and weight vectors discussed in Theorem~\ref{Thm: Root} and Theorem~\ref{Thm: Weight}, respectively, in the case of $\Lie{so}(4,2)$.

We have that $\Lie{so}(4,2)_0=\Lie{a}\oplus \Lie{m}$, where
\[
\Lie{a}=\left\{{\small \left(\hspace{-0.1in}\begin{array}{cccc|cc}
&&&&&\\
&&&&&\\
&&&&&a_2\\
&&&&a_1&\\
\hline
&&&a_1&&\\
&&a_2&&&
\end{array}\hspace{-0.1in}\right)}: a_1, \; a_2\in \RR \right\}
\quad\text{and}\quad
\Lie{m}=\left\{{\small \left(\hspace{-0.1in}\begin{array}{cccc|cc}
&x&&&&\\
-x&&&&&\\
&&&&&\\
&&&&&\\
\hline
&&&&&\\
&&&&&
\end{array}\hspace{-0.1in}\right)}: x\in \RR\right\}.
\]

The restricted roots for $(\Lie{so}(p,q), \Lie{a})$ and corresponding root spaces are as follows.

\begin{center}
\begin{table}[h]
\begin{tabular}{|c|c|c|}
\hline
root &$f_1$ & $f_2$ \\
\hline
$\begin{array}{c}
\text{root}\\
\text{vectors}
\end{array}$ & $\left(\begin{array}{cccc|cc}
&&&x_2&-x_2&\\
&&&x_1&-x_1&\\
&&&&&\\
-x_2&-x_1&&&&\\
\hline
-x_2&-x_1&&&&\\
&&&&&
\end{array}\right)$ & $\left(\begin{array}{cccc|cc}
&&x_2&&&-x_2\\
&&x_1&&&-x_1\\
-x_2&-x_1&&&&\\
&&&&&\\
\hline
&&&&&\\
-x_2&-x_1&&&&
\end{array}\right)$\\
\hline
& $-f_1$ & $-f_2$ \\
\hline
& $\left(\begin{array}{cccc|cc}
&&&x_2&x_2&\\
&&&x_1&x_1&\\
&&&&&\\
-x_2&-x_1&&&&\\
\hline
x_2&x_1&&&&\\
&&&&&
\end{array}\right)$ & $\left(\begin{array}{cccc|cc}
&&x_2&&&x_2\\
&&x_1&&&x_1\\
-x_2&-x_1&&&&\\
&&&&&\\
\hline
&&&&&\\
x_2&x_1&&&&
\end{array}\right)$\\
\hline
& $f_1+f_2$ & $-f_1-f_2$ \\
\hline
 & $\left(\begin{array}{cccc|cc}
&&&&&\\
&&&&&\\
&&&x&-x&\\
&&-x&&&x\\
\hline
&&-x&&&x\\
&&&x&-x&
\end{array}\right)$ & $\left(\begin{array}{cccc|cc}
&&&&&\\
&&&&&\\
&&&x&x&\\
&&-x&&&-x\\
\hline
&&x&&&x\\
&&&-x&-x&
\end{array}\right)$ \\
\hline
& $f_1-f_2$ & $-f_1+f_2$\\
\hline
 & $\left(\begin{array}{cccc|cc}
&&&&&\\
&&&&&\\
&&&x&-x&\\
&&-x&&&-x\\
\hline
&&-x&&&-x\\
&&&-x&x&
\end{array}\right)$ & $\left(\begin{array}{cccc|cc}
&&&&&\\
&&&&&\\
&&&x&x&\\
&&-x&&&x\\
\hline
&&x&&&-x\\
&&&x&x&
\end{array}\right)$\\
\hline
\end{tabular}
\caption{The restricted root system of $\Lie{so}(4,2)$}
\end{table}
\end{center}

\newpage
And the following table exhibits weight spaces of nonzero weights in $\Lie{s}$ for $(\Lie{so}(4,2), \Lie{a})$.
\begin{table}[h]
\begin{center}
\begin{tabular}{|c|c|c|c|}
\hline
weight &$f_1$ & $f_2$ & $-f_1$\\
\hline
$\hspace{-0.1in}\begin{array}{c}
\text{weight}\\
\text{vectors}
\end{array}\hspace{-0.1in}$ & {\small $\left(\hspace{-0.1in}\begin{array}{cccc|cc}
&&&x_2&-x_2&\\
&&&x_1&-x_1&\\
&&&&&\\
x_2&x_1&&&&\\
\hline
x_2&x_1&&&&\\
&&&&&
\end{array}\hspace{-0.1in}\right)$} & {\small $\left(\hspace{-0.1in}\begin{array}{cccc|cc}
&&x_2&&&-x_2\\
&&x_1&&&-x_1\\
x_2&x_1&&&&\\
&&&&&\\
\hline
&&&&&\\
x_2&x_1&&&&
\end{array}\hspace{-0.1in}\right)$} &  {\small $\left(\hspace{-0.1in}\begin{array}{cccc|cc}
&&&x_2&x_2&\\
&&&x_1&x_1&\\
&&&&&\\
x_2&x_1&&&&\\
\hline
-x_2&-x_1&&&&\\
&&&&&
\end{array}\hspace{-0.1in}\right)$} \\
\hline
 &$-f_2$ & $2f_1$ & $2f_2$ \\
\hline
 &{\small $\left(\hspace{-0.1in}\begin{array}{cccc|cc}
&&x_2&&&x_2\\
&&x_1&&&x_1\\
x_2&x_1&&&&\\
&&&&&\\
\hline
&&&&&\\
-x_2&-x_1&&&&
\end{array}\hspace{-0.1in}\right)$} &  {\small $\left(\hspace{-0.1in}\begin{array}{cccc|cc}
&&&&&\\
&&&&&\\
&&&&&\\
&&&x&-x&\\
\hline
&&&x&-x&\\
&&&&&
\end{array}\hspace{-0.1in}\right)$} & {\small $\left(\hspace{-0.1in}\begin{array}{cccc|cc}
&&&&&\\
&&&&&\\
&&x&&&-x\\
&&&&&\\
\hline
&&&&&\\
&&x&&&-x
\end{array}\hspace{-0.1in}\right)$} \\
\hline
& $-2f_1$ & $-2f_2$ & $f_1+f_2$\\
\hline
& {\small $\left(\hspace{-0.1in}\begin{array}{cccc|cc}
&&&&&\\
&&&&&\\
&&&&&\\
&&&x&x&\\
\hline
&&&-x&-x&\\
&&&&&
\end{array}\hspace{-0.1in}\right)$} & {\small $\left(\hspace{-0.1in}\begin{array}{cccc|cc}
&&&&&\\
&&&&&\\
&&x&&&x\\
&&&&&\\
\hline
&&&&&\\
&&-x&&&-x
\end{array}\hspace{-0.1in}\right)$} &  {\small $\left(\hspace{-0.1in}\begin{array}{cccc|cc}
&&&&&\\
&&&&&\\
&&&-x&x&\\
&&-x&&&x\\
\hline
&&-x&&&x\\
&&&-x&x&
\end{array}\hspace{-0.1in}\right)$}\\
\hline
& $-f_1-f_2$ & $f_1-f_2$ & $-f_1+f_2$\\
\hline
 & {\small $\left(\hspace{-0.1in}\begin{array}{cccc|cc}
&&&&&\\
&&&&&\\
&&&x&x&\\
&&x&&&x\\
\hline
&&-x&&&-x\\
&&&-x&-x&
\end{array}\hspace{-0.1in}\right)$} & {\small $\left(\hspace{-0.1in}\begin{array}{cccc|cc}
&&&&&\\
&&&&&\\
&&&x&-x&\\
&&x&&&x\\
\hline
&&x&&&x\\
&&&-x&x&
\end{array}\hspace{-0.1in}\right)$} & {\small $\left(\hspace{-0.1in}\begin{array}{cccc|cc}
&&&&&\\
&&&&&\\
&&&x&x&\\
&&x&&&x\\
\hline
&&-x&&&x\\
&&&x&x&
\end{array}\hspace{-0.1in}\right)$}\\
\hline
\end{tabular}
\caption{The weight decomposition of $\Lie{s}$ for the adjoint action of $\Lie{so}(4,2)$}
\end{center}
\end{table}

The weight space $\Lie{s}_0$ of zero weight is generated by
\[
{\small \left(\hspace{-0.1in}\begin{array}{cccc|cc}
&x_1&&&&\\
x_1&&&&&\\
&&&&&\\
&&&&&\\
\hline
&&&&&\\
&&&&&
\end{array}\hspace{-0.1in}\right),
\left(\hspace{-0.1in}\begin{array}{cccc|cc}
2x_2&&&&&\\
&2x_3&&&&\\
&&&&&\\
&&&-x_2-x_3&&\\
\hline
&&&&-x_2-x_3&\\
&&&&&
\end{array}\hspace{-0.1in}\right)},\;\text{and}{\small \left(\hspace{-0.1in}\begin{array}{cccc|cc}
&&&&&\\
&&&&&\\
&&-x_4&&&\\
&&&x_4&&\\
\hline
&&&&x_4&\\
&&&&&-x_4
\end{array}\hspace{-0.1in}\right).}
\]


\end{document}